
\documentclass[runningheads,a4paper]{llncs}
\pagestyle{headings} 
\usepackage[utf8]{inputenc}
\usepackage{hyperref}
\usepackage{verbatim}
\usepackage{mathtools}
\usepackage{caption,color}
\sloppy

\usepackage{amssymb,amstext,amsmath}
\newcommand{\overbar}[1]{\mkern 1.5mu\overline{\mkern-1.5mu#1\mkern-1.5mu}\mkern 1.5mu}
\newcommand{\no}[1]{\overbar{#1}}

\def\F{\mathcal F}

\def\M{\mathcal M}
\def\I{\mathcal I}
\def\H{\mathcal H}
\def\pr{\mathbb{P}}
\def\prev{\mathbb{P}}
\def\K{\mathcal{K}}
\def\G{\mathcal{G}}

\renewcommand{\bot}{\emptyset}
\renewcommand{\top}{\Omega}


\def\ind{\vbox{\hbox{$\bot$\kern-.6em$\bot$}\kern-.05em}}
\def\dep{\vbox{\hbox{$\top$\kern-.6em$\top$}\kern-.05em}}
\newcommand*{\normally}{\mathrel{\ooalign{$|$\hfil\cr\kern+1pt$\thicksim$}}} 
\newcommand*{\nnormally}{\mathrel{\ooalign{$|$\hfil\cr\kern+1pt$\thicksim$}\negthickspace \negthickspace /} } 

\def\I{\mathcal{I}}

\def\F{\mathcal{F}}

\def\J{J}

\title{A Generalized Probabilistic Version of Modus Ponens}
\author{
Giuseppe Sanfilippo\inst{1}\thanks{Partially supported by supported by the INdAM--GNAMPA Project 2016 Grant U 2016/000391}	 \and Niki Pfeifer\thanks{Supported by his DFG project PF~740/2-2 (within the SPP1516)} \inst{2} \and Angelo Gilio\inst{3} \thanks{Retired} }
\institute{
	Department of Mathematics and Computer Science,
	University of Palermo, Italy  
		 \email{giuseppe.sanfilippo@unipa.it}\and 
	Munich Center for Mathematical Philosophy, LMU Munich, Germany
 \email{niki.pfeifer@lmu.de} \and
	Department SBAI,
	University of Rome ``La Sapienza'', Italy
     \email{angelo.gilio@sbai.uniroma1.it}  
}
\renewcommand\top{\Omega}
\renewcommand\bot{\emptyset}
\begin{document}
	\maketitle
\begin{abstract}
Modus ponens (\emph{from $A$ and ``if $A$ then $C$'' infer $C$}; short: MP) is one of the most basic inference rules.  The probabilistic MP allows for managing uncertainty by transmitting assigned uncertainties from the premises to the conclusion (i.e., \emph{from $P(A)$ and $P(C|A)$ infer $P(C)$}). In this paper, we generalize the probabilistic MP by replacing $A$ by the conditional event $A|H$. The resulting inference rule involves  iterated conditionals (formalized by conditional random quantities) and propagates previsions from the premises to the conclusion. Interestingly, the propagation rules for the lower and the upper bounds  on the conclusion of the generalized probabilistic MP  coincide with the respective bounds on the conclusion for  the (non-nested) probabilistic MP.    
\keywords{Coherence,
	Conditional random quantities,
	Conjoined conditionals,
	Iterated conditionals,
	Modus Ponens,
	Prevision}
\end{abstract}
\section{Introduction}
Modus ponens (\emph{from $A$ and ``if $A$ then $C$'' infer $C$}) is one of the most basic and important inference rules. By instantiating the antecedent of a conditional it allows for detaching the consequent of the conclusion. 
It is well-known that modus ponens is logically valid (i.e., it is impossible that $A$ and $\no{A}\vee C$ are true while $C$ is false, where the event $\no{A}\vee C$ denotes  the material conditional as defined in classical logic).  
It is also well-known that there are philosophical arguments \cite{adams98,edginton14} and psychological arguments  \cite{evans03,pfeifer13b} in favor of the hypothesis that a conditional \emph{if $A$, then $C$} is best represented by a suitable conditional probability assertion $P(C|A)$ and not by a probability of a corresponding material conditional $P(\no{A}\vee C)$.  
Consequently, coherence-based probability logic  generalizes the classical modus ponens probabilistically by  propagating assigned probabilities  from the premises to the conclusion as follows (see, e.g., \cite{pfeifer06d,pfeifer09b,wagner04}): 
\begin{description}
\item[Probabilistic modus ponens] \emph{From $P(A)=x$  \emph{(probabilistic categorical premise)} and $P(C|A)=y$ \emph{(probabilistic conditional premise)} infer\linebreak $xy\leq P(C)\leq xy+1-y$ \emph{(probabilistic conclusion)}}.
\end{description}
In our paper,   $P(C|A)$ is  the probability of the \textit{conditional event} $C|A$  (see, e.g., \cite{coletti02,definetti36,definetti37,gilio16,lad96,PS16}).
The probabilistic modus ponens is $p$-valid (i.e., the premise set  $\{A, C|A\}$ $p$-entails  the conclusion $C$) and probabilistically informative \cite{GOPSsubm,GiSa13IJAR,gilio16,pfeifer09b}. 

In this paper we generalize the probabilistic modus ponens by 
  replacing the categorical premise (i.e., $A$) and the antecedent of the conditional premise (i.e., $A$ in ``\emph{if $A$ then $C$}'') by the conditional event $A|H$. 
  The resulting inference rule involves the prevision $\prev(C|(A|H))$ of the iterated conditional $C|(A|H)$ (formalized by a suitable conditional random quantity, see   \cite{GOPS17,GiSa13c,GiSa13a,GiSa14}) 
   and propagates the uncertainty  from the premises to the conclusion:
 \begin{description}
\item[Generalized probabilistic modus ponens] 
\emph{From $P(A|H)$ \emph{(generalized categorical premise)} and $\prev(C|(A|H))$ \emph{(generalized conditional premise)} infer $P(C)$} (conclusion). 
\end{description}
The conditional event $A|H$ is interpreted as a conditional random quantity, with   $\prev(A|H)=P(A|H)$  (see below). As mentioned above, modus ponens instantiates the antecedent of a conditional and governs the detachment of the consequent of the conclusion. In our generalization, we study the case where the unconditional event $A$ is replaced by the conditional event ($A|H$) and the conditional event $C|A$ is replaced by the iterated conditional $C|(A|H)$. This corresponds to a common-sense reasoning context where instead of a fact $A$ a rule $A|H$ is learned and used for a modus ponens inference.
\\
The outline of the paper is as follows. In Section~\ref{SEC:Preliminary} we first recall basic notions and results on coherence and  previsions of conditional random quantities. Then, we illustrate the notions of conjunction between conditional events and  of iterated conditional, by recalling some results.
In  Section~\ref{SEC:RESULTS} we prove a generalized decomposition formula for conditional events, with other results on compounded and iterated conditionals. In Section~\ref{SEC:generalizedMP} we propagate the previsions from the premises of the generalized probabilistic modus ponens to the conclusion.  We observe that this propagation rule coincides with the probability propagation rule for the (non-nested) probabilistic modus ponens (where $H=\top$) \cite{pfeifer09b}. Section~\ref{SEC:Conclud} concludes the paper with an outlook for future work.
\section{Preliminary notions}\label{SEC:Preliminary}
In this section we recall some basic notions and results on
coherence for conditional prevision assessments. In our approach an event $A$ represents an uncertain fact described by a (non-ambiguous) logical entity, where $A$ is  two-valued and  can be true ($T$), or false ($F$).
The indicator of $A$, denoted by the same symbol, is a two-valued numerical quantity which is 1, or 0, according to  whether $A$ is true, or false, respectively. The sure event is denoted by $\top$ and the impossible event is denoted by $\bot$.  Moreover, we denote by $A\land B$, or simply $AB$, (resp., $A \vee B$) the logical conjunction (resp., logical disjunction).   The  negation of $A$ is denoted by $\no{A}$. Given any events $A$ and $B$, we simply write $A \subseteq B$ to denote that $A$ logically implies $B$, that is,  $A\no{B}$ is the impossible event $\bot$. We recall that  $n$ events are logically independent when the number $m$ of constituents, or possible worlds, generated by them  is $2^n$ (in general  $m\leq 2^n$).
 Given two events $A$ and $H$, with $H\neq\bot$, the
 \emph{conditional event} $A|H$ is defined as a three-valued logical
 entity which is \emph{true} if $AH$ is true,
 \emph{false} if $\no{A}H$ is true, and \emph{void} if $H$ is false. 
 
\subsection{Coherent conditional prevision}
\label{Coherence}
We recall below the notion of coherence (see, e.g., \cite{BiGS08,BiGS12,CaLS07,coletti02,gilio16,GiSa14,PeVa17}).
Given a prevision function $\pr$ defined on an arbitrary family $\K$ of
conditional random quantities 
with finite sets of possible values,  consider a finite subfamily $\F_n = \{X_i|H_i, \, i
\in J_n\} \subseteq \K$, where  $J_n=\{1,\ldots,n\}$, and the vector
$\M_n=(\mu_i, \, i \in J_n)$, where $\mu_i = \pr(X_i|H_i)$ is the
assessed prevision for the c.r.q. $X_i|H_i$.
With the pair $(\F_n,\M_n)$ we associate the random gain $G =
\sum_{i \in J_n}s_iH_i(X_i - \mu_i)$; moreover, we set $\H_n = H_1 \vee \cdots \vee H_n$ and we denote by $\G_{\mathcal{H}_n}$ the set of values of $G$ restricted to $\H_n$. Then, using the {\em betting scheme} of de Finetti, we stipulate
\begin{definition}\label{COER-RQ}{\rm
		The function $\pr$ defined on $\K$ is coherent if and only if, $\forall n
		\geq 1$,  $\forall \, \F_n \subseteq \K,\, \forall \, s_1, \ldots,
		s_n \in \mathbb{R}$, it holds that: $min \; \G_{\mathcal{H}_n} \; \leq 0 \leq max \;
		\G_{\mathcal{H}_n}$. }
\end{definition}
Given a family $\F_n = \{X_1|H_1,\ldots,X_n|H_n\}$, for each $i \in J_n$ we denote by $\{x_{i1}, \ldots,x_{ir_i}\}$ the set of possible values for the restriction of $X_i$ to $H_i$; then, for each $i \in J_n$ and $j = 1, \ldots, r_i$, we set $A_{ij} = (X_i = x_{ij})$. Of course, for each $i \in J_n$, the family $\{\no{H}_i, A_{ij}H_i \,,\; j = 1, \ldots, r_i\}$ is a partition of the sure event $\Omega$, with  $A_{ij}H_i=A_{ij}$, $\bigvee_{j=1}^{r_i}A_{ij}=H_i$. Then,
the constituents generated by the family $\F_n$ are (the
elements of the partition of $\Omega$) obtained by expanding the
expression $\bigwedge_{i \in J_n}(A_{i1} \vee \cdots \vee A_{ir_i} \vee
\no{H}_i)$. We set $C_0 = \no{H}_1 \cdots \no{H}_n$ (it may be $C_0 = \bot$);
moreover, we denote by $C_1, \ldots, C_m$ the constituents
contained in $\H_n = H_1 \vee \cdots \vee H_n$. Hence
$\bigwedge_{i \in J_n}(A_{i1} \vee \cdots \vee A_{ir_i} \vee
\no{H}_i) = \bigvee_{h = 0}^m C_h$.
With each $C_h,\, h \in J_ m$, we associate a vector
$Q_h=(q_{h1},\ldots,q_{hn})$, where $q_{hi}=x_{ij}$ if $C_h \subseteq
A_{ij},\, j=1,\ldots,r_i$, while $q_{hi}=\mu_i$ if $C_h \subseteq \no{H}_i$;
 $C_0$ is associated with  $Q_0=\M_n = (\mu_1,\ldots,\mu_n)$. 
Denoting by $\I_n$ the convex hull of $Q_1, \ldots, Q_m$, the condition  $\M_n\in \I_n$ amounts to the existence of a vector $(\lambda_1,\ldots,\lambda_m)$ such that:
$ \sum_{h \in J_m} \lambda_h Q_h = \M_n \,,\; \sum_{h \in J_m} \lambda_h
= 1 \,,\; \lambda_h \geq 0 \,,\; \forall \, h$; in other words, $\M_n\in \I_n$ is equivalent to the solvability of the system $(\Sigma)$, associated with  $(\F_n,\M_n)$,
\begin{equation}\label{SYST-SIGMA}
\begin{array}{l}
(\Sigma) \;\;\;\sum_{h \in J_m} \lambda_h q_{hi} =
\mu_i \,,\; i \in J_n \,; \; \sum_{h \in J_m} \lambda_h = 1 \,;\;
\lambda_h \geq 0 \,,\;  \, h\in J_m \,.
\end{array}
\end{equation}
Given the assessment $\M_n =(\mu_1,\ldots,\mu_n)$ on  $\F_n =
\{X_1|H_1,\ldots,X_n|H_n\}$, let $S$ be the set of solutions $\Lambda = (\lambda_1, \ldots,\lambda_m)$ of system $(\Sigma)$ defined in  (\ref{SYST-SIGMA}).   
Then, the  following theorem can be proved (\cite{BiGS08})
\begin{theorem}\label{SYSTEM-SOLV}{ \rm [{\em Characterization of coherence}].
Given a family of $n$ conditional random quantities $\F =
\{X_1|H_1,\ldots,X_n|H_n\}$ and a vector $\M =
(\mu_1,\ldots,\mu_n)$, the conditional prevision assessment
$\prev(X_1|H_1) = \mu_1 \,,\, \ldots \,,\, \prev(X_n|H_n) =
\mu_n$ is coherent if and only if, for every subset $J \subseteq J_n$,
defining $\F_J = \{X_i|H_i \,,\, i \in J\}$, $\M_J = (\mu_i \,,\,
i \in J)$, the system $(\Sigma_J)$ associated with the pair
$(\F_J,\M_J)$ is solvable. }\end{theorem}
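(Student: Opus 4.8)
The plan is to establish the ``only if'' and ``if'' directions separately, the former being essentially immediate from Definition~\ref{COER-RQ} and the latter requiring a genuine argument. First I would recall the equivalence, derived in the paragraph preceding the statement, between coherence of the assessment $\M_n$ on $\F_n$ and the membership $\M_n \in \I_n$, i.e.\ the solvability of the system $(\Sigma)$ in~(\ref{SYST-SIGMA}); this reduces the whole theorem to showing that $\M_n \in \I_n$ holds if and only if $\M_J \in \I_J$ holds for every $J \subseteq J_n$. Since for a singleton $J = \{i\}$ the system $(\Sigma_{\{i\}})$ is always solvable (the assessed value $\mu_i$ lies between the minimum and maximum of the possible values $x_{i1}, \ldots, x_{ir_i}$ of $X_i|H_i$), and more generally one must only check proper subsets on which the conditioning event $\H_J$ can be false together with $\H_n$, this is a finite combinatorial condition.

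For the \emph{necessity} (``only if'') direction, I would argue by contraposition: if $(\Sigma_J)$ is unsolvable for some $J \subseteq J_n$, then $\M_J \notin \I_J$, so by a separating-hyperplane argument there exist coefficients $s_i$, $i \in J$, such that the random gain $G_J = \sum_{i \in J} s_i H_i (X_i - \mu_i)$ is strictly positive (or strictly negative) on all constituents contained in $\H_J$; choosing $s_i = 0$ for $i \notin J$ then yields a random gain $G$ for the full family $\F_n$ that violates the inequality $\min \G_{\H_n} \le 0 \le \max \G_{\H_n}$ of Definition~\ref{COER-RQ}, contradicting coherence of $\M_n$.

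For the \emph{sufficiency} (``if'') direction, which is the substantive part, the plan is: assuming $(\Sigma_J)$ is solvable for every $J \subseteq J_n$, show directly that $\min \G_{\H_n} \le 0 \le \max \G_{\H_n}$ for every choice of $s_1, \ldots, s_n$. Fix such a vector $s = (s_1, \ldots, s_n)$. The key construction is to pass to the subfamily $\F_{J_0}$ indexed by $J_0 = \{i : s_i \neq 0\}$, since the constituents where all $H_i$ with $s_i \neq 0$ are false contribute $0$ to $G$. By hypothesis $\M_{J_0} \in \I_{J_0}$, so $\M_{J_0}$ is a convex combination $\sum_h \lambda_h Q_h$ of the constituent-vectors $Q_h$ associated with the $C_h \subseteq \H_{J_0}$; expanding $G$ on these constituents and using that $\sum_h \lambda_h (Q_h - \M_{J_0}) = 0$, one gets that the $\lambda$-weighted average of the values of $G$ over the constituents in $\H_{J_0}$ is $0$, whence $G$ takes both a nonnegative and a nonpositive value there, and a fortiori on $\H_n$. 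I expect the main obstacle to be handling the bookkeeping of the constituents cleanly — in particular making precise the passage from the full family to $\F_{J_0}$ and verifying that the convex combination witnessing $\M_{J_0} \in \I_{J_0}$ can be transported back to a statement about $\G_{\H_n}$ — rather than any deep conceptual difficulty; the separating-hyperplane step in the necessity direction is also where one must be slightly careful about strictness and the sign of the separating functional. This is the argument of~\cite{BiGS08}, which I would cite for the full details.
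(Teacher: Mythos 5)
There is a genuine flaw in your opening reduction. The paper does \emph{not} establish an equivalence between coherence of $\M_n$ on $\F_n$ and the membership $\M_n\in\I_n$; it only states that $\M_n\in\I_n$ is equivalent to solvability of the single system $(\Sigma)$ in (\ref{SYST-SIGMA}). In fact ``coherence $\Leftrightarrow \M_n\in\I_n$'' is false, and the whole point of the theorem is that solvability of the full system is necessary but not sufficient: e.g.\ for $\F=\{A|H,H\}$ with the assessment $(\mu_1,\mu_2)=(5,0)$ one has $\M_2=Q_3=(\mu_1,0)$ (the point associated with $\no{H}$), so the full system is solvable, yet the assessment is incoherent, and it is exactly the subsystem for $J=\{1\}$ that fails. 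Consequently your claimed reduction (``$\M_n\in\I_n$ iff $\M_J\in\I_J$ for every $J$'') is a reduction to a false statement, and the two directions you sketch inherit this misdirection. In the necessity direction, padding with $s_i=0$ for $i\notin J$ and looking at $\G_{\H_n}$ does not produce a violation: on every constituent of $\H_n$ outside $\H_J$ the gain is $0$, so $\min\G_{\H_n}\le 0\le\max\G_{\H_n}$ still holds even though $G$ is strictly positive on $\H_J$. The violation must be exhibited for the subfamily $\F_J$ itself, with the gain restricted to $\H_J$ --- which Definition~\ref{COER-RQ} permits, since it quantifies over all subfamilies $\F_n\subseteq\K$; with that change your separating-hyperplane step is exactly right.

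Symmetrically, in the sufficiency direction proving $\min\G_{\H_n}\le 0\le\max\G_{\H_n}$ for every $s$ establishes only the condition for the full subfamily, which (again by separation) is equivalent to $\M_n\in\I_n$ and hence does not give coherence; Definition~\ref{COER-RQ} requires the inequality for every subfamily $\F_J$ with the gain restricted to its own $\H_J$. Your $J_0$-averaging device is the right tool, but it must be run for each $J\subseteq J_n$: fix $J$ and $(s_i)_{i\in J}$, set $J_0=\{i\in J: s_i\neq 0\}$, use the solvability of $(\Sigma_{J_0})$ (available by hypothesis, which is closed under subsets) to write $\M_{J_0}$ as a convex combination of the points $Q_h$ associated with the constituents in $\H_{J_0}$, conclude that $G$ attains a nonnegative and a nonpositive value on $\H_{J_0}\subseteq\H_J$, and hence $\min\G_{\H_J}\le 0\le\max\G_{\H_J}$. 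With these two corrections your argument does go through and follows the standard proof; note also that the paper itself gives no proof of this theorem but cites \cite{BiGS08}, so the comparison here is with that standard argument rather than with a proof in the text.
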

By following the approach given in \cite{GiSa13c,GiSa13a,GiSa14}  a conditional random quantity $X|H$ can be seen as the random quantity $XH+\mu\no{H}$, where $\mu=\prev(X|H)$. In particular a conditional event $A|H$ can be interpreted as $AH+x\no{H}$, where $x=P(A|H)$.   Moreover, the negation of $A|H$ is defined as  $\no{A|H}=1-A|H=\no{A}|H$. Coherence can be characterized in terms of proper scoring rules (\cite{BiGS12}), which can be related  to the notion of entropy in information theory (\cite{LSA15}). 
\subsection{Conjunction and iterated conditional}
\begin{definition}\label{CONJUNCTION}{\rm Given any pair of conditional events $A|H$ and $B|K$, with $P(A|H)=x$, $P(B|K)=y$, we define their conjunction
		as the conditional random quantity $(A|H) \wedge (B|K) = Z\,|\, (H \vee K)$, where $Z=\min \, \{A|H, B|K\}$.
}\end{definition}
Based on the betting scheme, the compound conditional $(A|H) \wedge (B|K)$ coincides with 
$1 \cdot AHBK + x \cdot \no{H}BK + y \cdot AH\no{K} + z \cdot \no{H}\no{K}$,
where $z$ is the \emph{prevision} of the random quantity $(A|H) \wedge (B|K)$, denoted by $\mathbb{P}[(A|H) \wedge (B|K)]$. Notice that $z$ represents the amount you agree to pay, with the proviso that you will receive the quantity $(A|H) \wedge (B|K)$.  
For examples see \cite{GiSa13a} and \cite{Kauf09}.  Notice that this notion of conjunction, with positive probabilities for the conditioning events, has been already proposed in \cite{McGe89}. Now, we recall the notion of iterated conditioning.
\begin{definition}[Iterated conditioning]\label{ITER-COND}{\rm
Given any pair of conditional events $A|H$ and $B|K$, the iterated conditional $(B|K)|(A|H)$ is the  conditional random quantity 
$(B|K)|(A|H) = (B|K) \wedge (A|H) + \mu \no A|H$,  where $\mu=\mathbb{P}[(B|K)|(A|H)]$.
}
\end{definition}
Notice that, in the context of betting scheme, $\mu$ represents the amount you agree to pay, with the proviso that you will receive the quantity 
\begin{equation}
\small
(B|K)|(A|H)=\left\{\begin{array}{ll}
1, &\mbox{ if }  AHBK \mbox{ true,}\\
0, &\mbox{ if }  AH\no{B}K \mbox{ true,}\\
y, &\mbox{ if }  AH\no{K} \mbox{ true,}\\
\mu, &\mbox{ if }  \no{A}H\mbox{ true,}\\
x+\mu(1-x), &\mbox{ if }  \no{H}BK \mbox{ true,}\\
\mu(1-x), &\mbox{ if }  \no{H}\no{B}K \mbox{ true,}\\
z+\mu(1-x), &\mbox{ if }  \no{H}\no{K} \mbox{ true.}\\
\end{array}
\right.
\end{equation}
We recall the following product formula (\cite{GiSa13a}) 
\begin{theorem}[Product formula]\label{THM:PRODUCT}{\rm
Given any  assessment $x=P(A|H), \mu=\mathbb{P}[(B|K) | (A|H)]$, $z=\mathbb{P}[(B|K) \wedge (A|H)]$, if $(x,\mu,z)$ is coherent, then
 $z=\mu \cdot x$.
}
\end{theorem}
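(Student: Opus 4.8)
The plan is to exploit the explicit representation of the iterated conditional $(B|K)|(A|H)$ given in Definition~\ref{ITER-COND}, namely $(B|K)|(A|H) = (B|K)\wedge(A|H) + \mu\,\no{A}|H$, and then pass to previsions on both sides. By linearity of prevision we obtain $\mu = \prev[(B|K)\wedge(A|H)] + \mu\cdot\prev(\no{A}|H)$. Now $\no{A}|H$ is a conditional event interpreted as a conditional random quantity with $\prev(\no{A}|H)=P(\no{A}|H)=1-P(A|H)=1-x$. Substituting $z=\prev[(B|K)\wedge(A|H)]$ and this value gives $\mu = z + \mu(1-x)$, hence $z = \mu - \mu(1-x) = \mu x$, which is the claimed identity.

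First I would make precise that the equation in Definition~\ref{ITER-COND} is an identity between conditional random quantities, so that applying the (coherent) prevision $\prev$ to it is legitimate; here the hypothesis that $(x,\mu,z)$ is coherent guarantees that the assessment on the family $\{A|H,\ (B|K)\wedge(A|H),\ (B|K)|(A|H)\}$ (together with $\no{A}|H$, whose prevision is forced to be $1-x$ by the coherence of $P(A|H)=x$ and the relation $\no{A|H}=1-A|H$) is consistent, so that taking expectations term by term is meaningful. Then I would invoke linearity of prevision—valid componentwise on the random-quantity representatives $XH+\mu\no{H}$—to split $\prev$ across the sum. The only slightly delicate point is justifying $\prev(\no{A}|H)=1-x$: this follows from $\no{A|H}=1-A|H=\no{A}|H$ recalled in Section~\ref{Coherence}, so that $\prev(\no{A}|H)=\prev(1-A|H)=1-\prev(A|H)=1-x$.

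Alternatively, and perhaps more transparently for a reader, one can argue directly from the case-based display of $(B|K)|(A|H)$ in the excerpt together with the analogous display of $(B|K)\wedge(A|H)$ from Definition~\ref{CONJUNCTION}: on each constituent the value of $(B|K)|(A|H)$ equals the value of $(B|K)\wedge(A|H)$ plus $\mu$ times the indicator value of $\no{A}|H$ (which is $1$ on $\no{A}H$, $0$ on $AH$, and $1-x$ on $\no{H}$), and then one takes the constituent-weighted average. Either way, the computation is routine; the substantive content is simply that coherence makes the linear relation between the three previsions $x$, $\mu$, $z$ hold with no inequality slack, collapsing to the exact product $z=\mu x$. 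I do not expect any real obstacle here—the main thing to be careful about is not to treat $(B|K)|(A|H)$ as an ordinary conditional probability, but rather as the specific conditional random quantity defined in Definition~\ref{ITER-COND}, for which the stated linear decomposition is definitional.
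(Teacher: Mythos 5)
Your argument is correct, and it is essentially the standard proof of this result: the paper itself states Theorem~\ref{THM:PRODUCT} without proof, recalling it from the cited reference \cite{GiSa13a}, where the argument is exactly the one you give — apply the linearity of coherent prevision to the defining identity $(B|K)|(A|H) = (B|K)\wedge(A|H) + \mu\,\no{A}|H$ of Definition~\ref{ITER-COND}, use $\prev(\no{A}|H)=1-\prev(A|H)=1-x$, and solve $\mu = z + \mu(1-x)$ for $z=\mu x$. Your remarks on why term-by-term prevision is legitimate under coherence (with $\mu$ treated as a constant coefficient of $\no{A}|H$) are the right points to make explicit, so there is no gap.
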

We recall that coherence requires that $(x,y,\mu,z)\in[0,1]^4$ (see, e.g., \cite{GOPS17}).
\begin{remark}\label{REM:AHK}
	Given any random quantity  $X$ and any events $H,K$, with $H\subseteq K$, $H\neq \bot$, it holds that (see \cite[Section 3.3]{GiSa14}): $(X|H)|K=X|HK=X|H$. In particular, given any events $A,H,K$,  $H\neq \bot$, it holds that: 
	$(A|H)|(H\vee K)=A|H$.
\end{remark}
\section{Some results on compounded and iterated conditionals}
\label{SEC:RESULTS}
In this section we present a decomposition formula, by also considering a particular case.
Then, we give a result on the coherence of a prevision assessment on $\mathcal{F}=\{A|H,C|(A|H), C|(\no{A}|H)\}$ which will be used in the next section.
\begin{proposition}\label{PROP:DECOMPOSITION}
	Let $A|H,B|K$ be two conditional  events. Then
	\begin{equation}
	B|K=(A|H)\wedge (B|K)+	(\no{A}|H)\wedge (B|K)\,.
	\end{equation}
\end{proposition}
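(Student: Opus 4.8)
The plan is to reduce the identity to a constituent-by-constituent check on the conditioning event $H\vee K$, and then to recover $B|K$ from the resulting conditional random quantity by means of Remark~\ref{REM:AHK}. First I would note that, by Definition~\ref{CONJUNCTION}, the two summands on the right-hand side are conditional random quantities with the \emph{same} conditioning event $H\vee K$, namely $(A|H)\wedge(B|K)=\min\{A|H,B|K\}\,|\,(H\vee K)$ and $(\no{A}|H)\wedge(B|K)=\min\{\no{A}|H,B|K\}\,|\,(H\vee K)$. Since conditional prevision is linear (a coherence property), for any two conditional random quantities $U|E$ and $V|E$ with the same conditioning event one has $U|E+V|E=(U+V)|E$; hence the right-hand side of the proposition equals $Z\,|\,(H\vee K)$, where $Z=\min\{A|H,B|K\}+\min\{\no{A}|H,B|K\}$.

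Next I would evaluate $Z$ on the constituents contained in $H\vee K$, recalling that $0\le B|K\le 1$, that on $H$ one has $A|H\in\{0,1\}$ with $\no{A}|H=1-A|H$, and that on $\no{H}K$ one has $B|K=B\in\{0,1\}$. On any constituent inside $AH$, $Z=\min\{1,B|K\}+\min\{0,B|K\}=B|K$; on any constituent inside $\no{A}H$, symmetrically $Z=0+B|K=B|K$; and on any constituent inside $\no{H}K$, $Z=\min\{x,B\}+\min\{1-x,B\}=B$ (equal to $1$ if $B=1$ and to $0$ if $B=0$). Since the constituents contained in $H\vee K$ are exactly those contained in $AH$, in $\no{A}H$, or in $\no{H}K$, this shows that $Z$ agrees with $B|K$ on all of $H\vee K$, whence $Z\,|\,(H\vee K)=(B|K)\,|\,(H\vee K)$. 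Finally, since $K\subseteq H\vee K$, Remark~\ref{REM:AHK} gives $(B|K)\,|\,(H\vee K)=B|K$, which is the claim.

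The step that needs care — and the one I expect to be the main obstacle — is the passage ``$Z\,|\,(H\vee K)=(B|K)\,|\,(H\vee K)$'': the \emph{unconditional} random quantities $Z$ and $B|K$ do not coincide everywhere (they differ on $\no{H}\,\no{K}$), so one cannot simply equate the inner parts. What one uses is that a conditional random quantity $X|E$ depends on $X$ only through its restriction to $E$, the value on $\no{E}$ being the prevision $\mathbb{P}(X|E)$, which by coherence is forced to be the same for $X=Z$ and $X=B|K$ once the two agree on $E$; this is precisely where linearity of prevision enters, guaranteeing that the coefficient of $\no{H}\,\no{K}$ on the left behaves as a single conditional prevision.

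Equivalently, if one prefers a purely computational route, one writes the betting-scheme forms $(A|H)\wedge(B|K)=AHBK+y\,AH\no{K}+x\,\no{H}BK+z\,\no{H}\no{K}$ and $(\no{A}|H)\wedge(B|K)=\no{A}HBK+y\,\no{A}H\no{K}+(1-x)\,\no{H}BK+z''\,\no{H}\no{K}$, where $x=P(A|H)$, $y=P(B|K)$, $z=\mathbb{P}[(A|H)\wedge(B|K)]$, $z''=\mathbb{P}[(\no{A}|H)\wedge(B|K)]$; adding them yields $BK+y\,H\no{K}+(z+z'')\,\no{H}\no{K}$, while $B|K=BK+y\no{K}=BK+y\,H\no{K}+y\,\no{H}\no{K}$, so the identity comes down exactly to $z+z''=y$, which is the content of the observation above (linearity of prevision applied to $Z\,|\,(H\vee K)$, together with the evaluation of $Z$ on $H\vee K$).
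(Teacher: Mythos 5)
Your proposal is correct and takes essentially the same route as the paper's proof: both hinge on showing the two sides agree on $H\vee K$ and then invoking linearity of conditional prevision together with Remark~\ref{REM:AHK} to force the values on $\no{H}\,\no{K}$ to coincide (i.e., $z_1+z_2=y$). Indeed, your ``purely computational route'' at the end is precisely the argument given in the paper.
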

\begin{proof}
Let $(x,y,z_1,z_2)$ be a (coherent) prevision  on  $(A|H,B|K,(A|H)\wedge (B|K),(\no{A}|H)\wedge (B|K)
)$.  Of course, coherence requires that 
$P(\no{A}|H)=1-x$. By Definition~\ref{CONJUNCTION} it holds that
\[
(A|H)\wedge (B|K)=AHBK+x\no{H}BK+y\no{K}AH+z_1\no{H}\no{K}
\]
and
\[
(\no{A}|H)\wedge (B|K)=\no{A}HBK+(1-x)\no{H}BK+y\no{K}\no{A}H+z_2\no{H}\no{K}\,.
\]
Then,
\begin{equation}\label{EQ:SUM}
\begin{array}{ll}
(A|H)\wedge (B|K)+(\no{A}|H)\wedge (B|K)= HBK+\no{H}BK+y\no{K}H+z_1\no{H}\no{K}+z_2\no{H}\no{K}=\\
=BK+y\no{K}H+(z_1+z_2)\no{H}\no{K}.
\end{array}
\end{equation}
Moreover,
\begin{equation}\label{EQ:BgK}
B|K=BK+y\no{K}=BK+y\no{K}H+y\no{H}\no{K}\,.
\end{equation}
From (\ref{EQ:SUM}) and (\ref{EQ:BgK}), when $H\vee K$ is true, it holds that 
\[
(A|H)\wedge (B|K)+(\no{A}|H)\wedge (B|K)=BK+y\no{K}H=B|K\,.
\]
Then, the difference 
$[(A|H)\wedge (B|K)+(\no{A}|H)\wedge (B|K)]-B|K
$
is zero when $H\vee K$  is true. 
Thus,
\[
\begin{small}
\begin{array}{ll}
\prev[((A|H)\wedge (B|K)+(\no{A}|H)\wedge (B|K)-B|K)|(H\vee K)]=\\
\prev[((A|H)\wedge (B|K))|(H\vee K)]+
\prev[((\no{A}|H)\wedge (B|K))|(H\vee K)]-\prev[(B|K)|(H\vee K)]=0.
\end{array}
\end{small}
\]
By Remark~\ref{REM:AHK} it holds that 
$[(A|H)\wedge (B|K)]|(H\vee K)$, $[(\no{A}|H)\wedge (B|K)]|(H\vee K)$, and 
 $(B|K)|(H\vee K)$ coincide with  $(A|H)\wedge (B|K)$, $(\no{A}|H)\wedge (B|K)$, and 
 $B|K$, respectively.
Then,
\[
\begin{array}{ll}
\prev[((A|H)\wedge (B|K)+(\no{A}|H)\wedge (B|K)-B|K)|(H\vee K)]=\\
\prev[(A|H)\wedge (B|K)]+\prev[(\no{A}|H)\wedge (B|K)]-P(B|K)=z_1+z_2-y=0\,.
\\
\end{array}
\]
Therefore,
 $(A|H)\wedge (B|K)+(\no{A}|H)\wedge (B|K)$ and  $B|K$ also coincide 
when $H \vee K$ is false. Thus, $(A|H)\wedge (B|K)+(\no{A}|H)\wedge (B|K)=B|K$.
\end{proof}
\begin{remark}\label{REM:2}
Notice that Proposition~\ref{PROP:DECOMPOSITION} also holds when  there are some logical relations among the events $A,B,H,K$, provided that $H\neq \bot $ and $K\neq \bot $. In particular, if $K=\top$ the proof of Proposition~\ref{PROP:DECOMPOSITION} is simpler because, 
by Definition~\ref{CONJUNCTION},
\[
(A|H)\wedge B=AHB+x\no{H}B,\;\; (\no{A}|H)\wedge B=\no{A}HB+(1-x)\no{H}B\,,
\]
hence
\begin{equation}\label{EQ:DECOMP}
(A|H)\wedge B+(\no{A}|H)\wedge B=HB+\no{H}B=B.
\end{equation}
\end{remark}
\begin{remark}\label{REM:DIN-COHER}
Consider a bet on an iterated conditional $C|(A|H)$, with $H\neq\bot, A\neq \bot$,  $x=P(A|H)$, and $y=\prev[C|(A|H)]$. In this bet, $y$ is the amount that we  pay, while  $C|(A|H)$ is the amount that we receive. Then, in order to check coherence, the bet on $C|(A|H)$ must be called off when  $C|(A|H)$ coincides  with its prevision $y$. We distinguish two cases: $(i)$ $x>0$, $(ii)$ $x=0$.\\
Case $(i)$. By applying Definition  \ref{ITER-COND},  with $B$ replaced by $C$ and $K=\top$, we obtain 
\begin{equation}
\small
C|(A|H)=\left\{\begin{array}{ll}
1, &\mbox{ if }  AHC \mbox{ true,}\\
0, &\mbox{ if }  AH\no{C} \mbox{ true,}\\
y, &\mbox{ if }  \no{A}H\mbox{ true,}\\
x+y(1-x), &\mbox{ if }  \no{H}C \mbox{ true,}\\
y(1-x), &\mbox{ if }  \no{H}\no{C} \mbox{ true.}
\end{array}
\right.
\end{equation}
If $x>0$, $C|(A|H)=y$ when $\no{A}H$ is true, that is,  the bet on $C|(A|H)$ is called off when $\no{A}H$ is true;
then to check coherence   we must only consider the constituents contained in $\no{\no{A}H}=AH\vee \no{H}$.\\
Case $(ii)$. As $x=0$, we have 
\begin{equation}
\small
C|(A|H)=\left\{\begin{array}{ll}
1, &\mbox{ if }  AHC \mbox{ true}\\
0, &\mbox{ if }  AH\no{C} \mbox{ true}\\
y, &\mbox{ if }  \no{A}H\mbox{ true}\\
y, &\mbox{ if }  \no{H}C \mbox{ true}\\
y, &\mbox{ if }  \no{H}\no{C} \mbox{ true}
\end{array}
\right.=\left\{\begin{array}{ll}
1, &\mbox{ if }  AHC \mbox{ true}\\
0, &\mbox{ if }  AH\no{C} \mbox{ true}\\
y, &\mbox{ if }  \no{AH} \mbox{ true}\\
\end{array}
\right.
\end{equation}
Then  $C|(A|H)=C|AH$ (see \cite[Theorem 4]{GiSa14}) and  to check coherence we must only consider the constituents contained in $AH$.\\
We denote by $(x>0)$  an event which is true or false, according to whether $x$ is positive or not. Then, by unifying Case $(i)$ and Case $(ii)$,  the constituents such that  the bet on $C|(A|H)$ is not called off are those contained in $AH\vee\no{H}(x>0)$.

\end{remark}

\begin{theorem}\label{THM:ABOH}
Let three logically independent events $A,C,H$ be given, with $A\neq \bot$, $H\neq \bot$.
The set of all coherent assessments $\mathcal{M}=(x,y,z)$ on $\mathcal{F}=\{A|H,C|(A|H), C|(\no{A}|H)\}$ is the unit cube $[0,1]^3$.
\end{theorem}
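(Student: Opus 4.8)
The plan is to prove the two inclusions separately. That coherence of $\mathcal{M}=(x,y,z)$ forces $\mathcal{M}\in[0,1]^3$ is immediate from facts already recalled: $x=P(A|H)\in[0,1]$ because $A|H$ is a conditional event, and $y=\prev[C|(A|H)]$, $z=\prev[C|(\no{A}|H)]$ lie in $[0,1]$ because, as recalled, the prevision $\prev[(B|K)|(A|H)]$ of an iterated conditional lies in $[0,1]$ whenever the assessment is coherent. So the real content is the converse: every $(x,y,z)\in[0,1]^3$ is coherent. I would establish this through Theorem~\ref{SYSTEM-SOLV}, i.e.\ by showing that for each nonempty $J\subseteq\{1,2,3\}$ the assessment $\mathcal{M}_J$ lies in the convex hull $\I_J$ of the constituent-value vectors $Q_h$ associated with $\F_J=\{X_i|H_i \,,\, i\in J\}$; equivalently, by exhibiting an explicit solution of each system $(\Sigma_J)$.

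The first step is to list the constituents and the associated vectors. Since $A,C,H$ are logically independent, the partition has $2^3=8$ constituents. Using Definition~\ref{CONJUNCTION} and Definition~\ref{ITER-COND} (with $K=\top$, cf.\ Remark~\ref{REM:DIN-COHER}), for $x\in(0,1)$ one has
\[
A|H=AH+x\no{H},\qquad C|(A|H)=AHC+x\no{H}C+y\no{A}H+y(1-x)\no{H},
\]
\[
C|(\no{A}|H)=\no{A}HC+(1-x)\no{H}C+zAH+zx\no{H}.
\]
Reading off, constituent by constituent, the value of each of these three conditional random quantities (recalling that on a constituent where a conditional random quantity coincides with its prevision the corresponding coordinate of $Q_h$ equals the assessed prevision), one finds that the vectors $Q_h$ take the six distinct forms
\[
(1,1,z),\quad (1,0,z),\quad (0,y,1),\quad (0,y,0),\quad \bigl(x,\,x+y(1-x),\,1-x+zx\bigr),\quad \bigl(x,\,y(1-x),\,zx\bigr),
\]
the first four arising from the constituents contained in $H$ and the last two from those contained in $\no{H}$.

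The second step is to exhibit the convex combination. One checks directly that
\[
\mathcal{M}=(x,y,z)=xy\,(1,1,z)+x(1-y)\,(1,0,z)+(1-x)z\,(0,y,1)+(1-x)(1-z)\,(0,y,0),
\]
whose four coefficients are nonnegative precisely because $x,y,z\in[0,1]$ and sum to $x+(1-x)=1$; the componentwise identity is a one-line verification. Equivalently, this says that the unconditional probability assessment $P(AHC)=xy$, $P(AH\no{C})=x(1-y)$, $P(\no{A}HC)=(1-x)z$, $P(\no{A}H\no{C})=(1-x)(1-z)$ (and $0$ on the remaining constituents) induces $\mathcal{M}$ on $\F$. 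The same four vectors, suitably projected, also give convex representations of $\mathcal{M}_J$ for every proper nonempty $J\subseteq\{1,2,3\}$ after trivial simplification; for the singletons it is immediate, since each of $x$, $y$, $z$ already occurs as a coordinate of some $Q_h$. By Theorem~\ref{SYSTEM-SOLV}, $\mathcal{M}$ is then coherent.

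The step requiring the most care — and the main obstacle — is the boundary $x\in\{0,1\}$. There the iterated conditional $C|(A|H)$ (if $x=0$) or $C|(\no{A}|H)$ (if $x=1$) degenerates, by Remark~\ref{REM:DIN-COHER}, into the ordinary conditional event $C|AH$ (resp.\ $C|\no{A}H$), so the set of constituents on which each bet is active changes and the list of $Q_h$ above is no longer literally correct. I would dispose of these two cases directly: for $x=0$, under the assessment above the conditioning event $AH$ is $P$-null, so $\prev[C|AH]$ is unconstrained in $[0,1]$ and may be set to $y$, and a short check (or the convex-hull argument with the degenerate $Q_h$) gives coherence of $(0,y,z)$; the case $x=1$ is symmetric. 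Alternatively, since the set of coherent assessments of a fixed finite family is closed, coherence on $(0,1)^3$ already forces it on the closure $[0,1]^3$. Beyond this boundary bookkeeping the argument is routine linear algebra; the genuine subtlety throughout is the self-referential nature of iterated conditionals, namely that the entries of the $Q_h$ themselves involve $x,y,z$ and that the active constituents depend on whether $x$ is a boundary value.
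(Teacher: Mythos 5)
Your proposal is correct and follows essentially the same route as the paper's proof: necessity of $[0,1]^3$, then sufficiency via Theorem~\ref{SYSTEM-SOLV}, with exactly the same six constituent points $Q_h$, the same convex weights $xy$, $x(1-y)$, $(1-x)z$, $(1-x)(1-z)$ for the full system, the observation that the $\no{H}$-points are convex combinations of the others, and a separate treatment of the degenerate cases $x\in\{0,1\}$ (which the paper carries out explicitly inside the subsystem checks). Two small points to tighten: for the singleton subsystem $\F_{\{2\}}$ with $x>0$ the constituent $\no{A}H$ lies outside $\H_J=AH\vee\no{H}$, so $y\in\I_J$ should be justified by $y=y\cdot 1+(1-y)\cdot 0$ (values at $AHC$ and $AH\no{C}$) rather than by $y$ appearing as a coordinate of $Q_2,Q_4$ (similarly for $\F_{\{3\}}$), and the alternative closure shortcut should be dropped, since closedness of the set of coherent assessments is not established here and is delicate precisely because the iterated conditionals and their conditioning events depend on the assessed values themselves.
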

\begin{proof}
Coherence requires that $x,y$, and $z$ must be in $[0,1]$ (see, e.g., \cite{GOPS17}). Thus,
$(x,y,z)$ is not coherent when $(x,y,z)\notin[0,1]^3$.\\
 Let $\mathcal{M}=(x,y,z)\in[0,1]^3$ be  a prevision  assessment on $\mathcal{F}=\{A|H,C|(A|H), C|(\no{A}|H)\}$. Based on Theorem~\ref{SYSTEM-SOLV} we prove coherence by showing that 
 for each subset $\J\subseteq \{1,2,3\}$  the system $(\Sigma_J)$ is solvable.
By Definition~\ref{ITER-COND},
 \[
 C|(A|H)=C\wedge (A|H)+y\no{A}|H\,,\;\; C|(\no{A}|H)=C\wedge (\no{A}|H)+zA|H\,.\\
 \]
We start with $\J=\{1,2,3\}$. Based on Remark~\ref{REM:DIN-COHER}, we observe that  
$\H_3=H_1\vee  H_2 \vee H_3=H \vee (AH \vee \no{H}(x>0)\vee(\no{A}H \vee \no{H}(x<1) )=H \vee \no{H} =\top$.
The constituents $C_h$'s (contained in $\H_3$)  and the corresponding points $Q_h$'s associated with $(\mathcal{F},\mathcal{M})$ are:
\[
\begin{array}{ll}
C_1=AHC,\,
C_2=\no{A}HC,\,
C_3= AH\no{C},\,
C_4= \no{A}H\no{C},\,
C_5=\no{H}C,\,
C_6= \no{H}\no{C},
\end{array}
\]
\[
\begin{array}{ll}
 Q_1=(1,1,z),\,
 Q_2=(0,y,1),\,
 Q_3=(1,0,z),\,
 Q_4=(0,y,0),\\
 Q_5=(x,x+y(1-x),(1-x)+xz),\,
 Q_6=(x, y(1-x),xz)\,.\\
\end{array}
\]
We observe that $Q_5=xQ_1+(1-x)Q_2$ and $Q_6=xQ_3+(1-x)Q_4$, so that
for checking the solvability of the system $(\Sigma_{\J})$ it is enough to consider the points
 $Q_1,Q_2,Q_3,Q_4$. The condition 
$(x,y,z)=\sum_{h=1}^4\lambda_h Q_h$, with $\lambda_h\geq 0$ and $\sum_{h=1}^4\lambda_h=1$, is satisfied for every
$(x,y,z)\in[0,1]^3$. Indeed, the system
\[
\left\{
\begin{array}{ll}
x=\lambda_1+\lambda_3\,, \;
y=\lambda_1+\lambda_2y +\lambda_4y\,, \;
z=\lambda_1z+\lambda_2 +\lambda_3z\,, \; \\
\sum_{h=1}^4\lambda_h =1, \;\; \lambda_h\geq 0\,, \; h=1,2,3,4 
\end{array}
\right.
\]
has the non-negative solution 
\[
\left\{
\begin{array}{ll}
\lambda_1=y(1-\lambda_2-\lambda_4)=y(\lambda_1+\lambda_3)=xy\,, \;\lambda_2=z(1-x)\,, \; \\
\lambda_3=(1-y)x\,, \; \lambda_4=1-z+xz-x=  (1-x)(1-z)\,.
\end{array}
\right.
\]
With  $\J=\{1,2\}$ we  associate the pair $(\F_{\J},\M_{\J})$, where $\F_{\J}=\{A|H,C|(A|H)\}$ and $M_{\J}=(x,y)$. By Remark~\ref{REM:DIN-COHER} we notice that $\H_2={H_1\vee H_2=
H \vee (AH\vee \no{H}(x>0))}$ and then we
distinguish two cases: $(i)$ $x>0$, where $\H_2=\top$; $(ii)$ $x=0$, where $\H_2=H$.\\
Case $(i)$.  
The constituents $C_h$'s (contained in $\H_2=\top$) and the corresponding points $Q_h$'s are:
\[
C_1=AHC\,,\;
C_2=\no{A}H \,,\;
C_3= AH\no{C}\,,\;
C_4=\no{H}C\,,\;
C_5= \no{H}\no{C}\,,\;
\]
\[
Q_1=(1,1)\,,\;
Q_2=(0,y)\,,\;
Q_3=(1,0)\,,\;
Q_4=(x,x+y(1-x))\,,\;
Q_5=(x, y(1-x))\,.\;
\]
We observe that $Q_4=xQ_1+(1-x)Q_2$ and $Q_5=xQ_3+(1-x)Q_2$; then we only refer to $Q_1,Q_2,Q_3$.
The condition $(x,y)=\sum_{h=1}^3\lambda_h Q_h$, with $\lambda_h\geq 0$ and $\sum_{h=1}^3\lambda_h=1$, is satisfied for every
$(x,y)\in[0,1]^2$. Indeed, the system
\[
\left\{
\begin{array}{ll}
x=\lambda_1+\lambda_3\,,\;
y=\lambda_1+\lambda_2y\,,\; \\
\sum_{h=1}^3\lambda_h =1; \;\; \lambda_h\geq 0\,, h=1,2,3\;, 
\end{array}
\right.
\]
which can be written as 
\[
\left\{
\begin{array}{ll}
\lambda_1=y(1-\lambda_2)=y(\lambda_1+\lambda_3)=xy\,, \\
\lambda_2=1-xy-(1-y)x=1-x\,,
\lambda_3=(1-y)x\,,\\
\end{array}
\right.
\]
 is solvable.\\
Case $(ii)$. As $x=0$, by Remark~\ref{REM:DIN-COHER}, it holds that  $C|(A|H)=C|AH$. 
The constituents $C_h$'s (contained in $\H_2=H$) and the corresponding points $Q_h$'s are:
$
C_1=AHC\,,\;
C_2=\no{A}H \,,\;
C_3= AH\no{C}\,,\;
$
$
Q_1=(1,1)\,,\;
Q_2=(0,y)\,,\;
Q_3=(1,0)\,.\;
$
The condition $(0,y)=\sum_{h=1}^3\lambda_h Q_h$, with $\lambda_h\geq 0$ and $\sum_{h=1}^3\lambda_h=1$, is satisfied for every
$y\in[0,1]$. Indeed, the assessment $(0,y)$ coincides with $Q_2$ and  the system is solvable, with $\lambda_1=\lambda_3=0$ and $\lambda_2=1$.
\\
With  $\J=\{1,3\}$ we  associate the pair $(\F_{\J},\M_{\J})$, where $\F_{\J}=\{A|H,C|(\no{A}|H)\}$ and $M_{\J}=(x,z)$.
We note that the assessment $\F_{\J}$ on$\M_{\J}$  is equivalent to the assessment $(1-x,z)$ on $(\no{A}|H,C|(\no{A}|H))$. By the same reasoning as for $\J=\{1,2\}$, the system associated with $(1-x,z)$ on $\{\no{A}|H,C|(\no{A}|H)\}$ is solvable. Then, the system associated with $(x,z)$ on $\{A|H,C|(\no{A}|H)\}$ is solvable too.\\
With  $\J=\{2,3\}$ we  associate the pair $(\F_{\J},\M_{\J})$, where $\F_{\J}=\{C|({A}|H),C|(\no{A}|H)\}$ and $M_{\J}=(y,z)$;
by  Remark~\ref{REM:DIN-COHER},   $H_1=(AH\vee \no{H}(x>0)$, $H_2=(\no{A}H\vee \no{H}(x<1)$, so that
$\H_2= (AH\vee \no{H}(x>0)\vee(\no{A}H\vee \no{H}(x<1))=\top$.
The constituents $C_h$'s  (contained in $\H_2=\top$) and the corresponding points $Q_h$'s  are:
$
C_1=AHC\,,\;
C_2=\no{A}HC\,,\;
C_3= AH\no{C}\,,\;
C_4= \no{A}H\no{C}\,,\;
C_5=\no{H}C\,,\;
C_6= \no{H}\no{C}\,,\;$ and 
$Q_1=(1,z)\,,\;
Q_2=(y,1)\,,\;
Q_3=(0,z)\,,\;
Q_4=(y,0)\,,\;\\
Q_5=(x+y(1-x),(1-x)+xz)\,,\;
Q_6=( y(1-x),xz)\,.
$
We observe that $Q_5=xQ_1+(1-x)Q_2$ and $Q_6=xQ_3+(1-x)Q_4$; then  we only refer to the points $Q_1,Q_2,Q_3,Q_4$. The condition 
$(y,z)=\sum_{h=1}^4\lambda_h Q_h$, with $\lambda_h\geq 0$ and $\sum_{h=1}^4\lambda_h=1$, is satisfied for every
$(y,z)\in[0,1]^2$. Indeed, 
$(y,z)=yQ_1+(1-y)Q_3$.
\\
With  $\J=\{1\}$ we  associate the pair $(\F_{\J},\M_{\J})$, where $\F_{\J}=\{A|H\}$ and $M_{\J}=x$;
the assessment $P(A|H)=x$ is coherent for every $x\in[0,1]$; then the system is solvable.\\
With  $\J=\{2\}$ we  associate the pair $(\F_{\J},\M_{\J})$, where $\F_{\J}=\{C|(A|H)\}$ and $M_{\J}=y$.
By Remark~\ref{REM:DIN-COHER} we notice that $\H_1=
AH\vee \no{H}(x>0)$ and then we
distinguish two cases: $(i)$ $x>0$, where $\H_1=AH\vee \no{H}$; $(ii)$ $x=0$, where $\H_1=AH$.\\
Case $(i)$.   The constituents $C_h$'s (contained in $\H_1=AH\vee \no{H}$) and the corresponding points $Q_h$'s are:
$
C_1=AHC\,,\;
C_2= AH\no{C}\,,\;
C_3=\no{H}C\,,\;
C_4= \no{H}\no{C}\,,\;
Q_1=1\,,\;
Q_2=0\,,\;
Q_3=x+y(1-x)\,,\;
Q_4=y(1-x)\,.\;
$
We observe that $y=yQ_1+(1-y)Q_2$, then  the system  $y=\sum_{h=1}^4\lambda_h Q_h$, with $\lambda_h\geq 0$ and $\sum_{h=1}^4\lambda_h=1$, is solvable; indeed a solution is $(\lambda_1,\lambda_2,\lambda_3,\lambda_4)=(y,1-y,0,0)$.\\
Case $(ii)$.   The constituents $C_h$'s (contained in $\H_1=AH$) and the corresponding points $Q_h$'s are:
$C_1=AHC\,,\;
C_2= AH\no{C}\,,\;
Q_1=1\,,\;
Q_2=0\,,\;
$
We observe that $y=yQ_1+(1-y)Q_2$, then the system  $y=\sum_{h=1}^2\lambda_h Q_h$, with $\lambda_h\geq 0$ and $\sum_{h=1}^2\lambda_h=1$, is solvable, with  the unique solution $(\lambda_1,\lambda_2)=(y,1-y)$.\\
With  $\J=\{3\}$ we  associate the pair $(\F_{\J},\M_{\J})$, where $\F_{\J}=\{C|(\no{A}|H)\}$ and $M_{\J}=z$.
In this case the reasoning is the same as for $\J=\{2\}$, with  $A$ replaced by $\no{A}$, with $x$ replaced by $1-x$, and with $y$ replaced by $z$.\\
In conclusion, the assessment  $(x,y,z)$ on $\{A|H,C|(A|H),C|(\no{A}|H)\}$ is coherent for every $(x,y,z)\in[0,1]^3$.
\end{proof}
\section{Generalized Modus Ponens}
\label{SEC:generalizedMP}
We now  generalize the \emph{Modus Ponens} to the case where the first premise  $A$ is replaced by the conditional event $A|H$.
\begin{theorem}\label{THM:MODUSPONENS}
Given any coherent assessment $(x,y)$ on $\{A|H, C|(A|H)\}$, with $A,C,H$ logically independent, with $A\neq \bot$  and $H\neq \bot$,  the extension $z = P(C)$ is coherent if and only if $z\in[z',z'']$, where 
\begin{equation}
z'=xy \;\; \text{ and } \;\; z'' = xy+1-x \,.
	\end{equation}
\end{theorem}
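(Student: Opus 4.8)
The plan is to determine the set of coherent extensions $z=P(C)$ by working with the full family $\mathcal{F}'=\{A|H,\,C|(A|H),\,C\}$ and applying the characterization of coherence (Theorem~\ref{SYSTEM-SOLV}). Since $(x,y)$ on $\{A|H,C|(A|H)\}$ is already coherent (indeed, by Theorem~\ref{THM:ABOH} every $(x,y)\in[0,1]^2$ is coherent here), the coherent values of $z$ form a closed interval $[z',z'']$; it suffices to compute its endpoints.

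First I would write down the constituents generated by $\mathcal{F}'$ and the associated points $Q_h$ in $\mathbb{R}^3$. Using Remark~\ref{REM:DIN-COHER} to handle when the bet on $C|(A|H)$ is called off, and noting $C$ is an unconditional event (so $H_3=\top$), the relevant constituents are $AHC,\ \no{A}HC,\ AH\no{C},\ \no{A}H\no{C},\ \no{H}C,\ \no{H}\no{C}$, with points of the form $(1,1,1)$, $(0,y,1)$, $(1,0,0)$, $(0,y,0)$, $(x,\,x+y(1-x),\,1)$, $(x,\,y(1-x),\,0)$, the last two being convex combinations $x Q_1+(1-x)Q_2$ and $x Q_3+(1-x)Q_4$ of the first four, exactly as in the proof of Theorem~\ref{THM:ABOH}. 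So $\mathcal{M}'=(x,y,z)$ is coherent iff $(x,y,z)$ lies in the convex hull of $Q_1=(1,1,1)$, $Q_2=(0,y,1)$, $Q_3=(1,0,0)$, $Q_4=(0,y,0)$, i.e.\ iff the system
\[
x=\lambda_1+\lambda_3,\quad y=\lambda_1+(\lambda_2+\lambda_4)y,\quad z=\lambda_1+\lambda_2,\quad \textstyle\sum_h\lambda_h=1,\ \lambda_h\ge 0
\]
is solvable. From the first two equations one gets $\lambda_1=xy$, hence $\lambda_3=x(1-y)$ and $\lambda_2+\lambda_4=1-x$; then $z=xy+\lambda_2$ with $\lambda_2$ free in $[0,1-x]$. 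This forces $xy\le z\le xy+1-x$, and every such $z$ is attained by a non-negative solution, giving $z'=xy$ and $z''=xy+1-x$. (One should also check the subsystems $(\Sigma_J)$ for proper subsets $J$, but these reduce to the already-coherent assessments on subfamilies of $\{A|H,C|(A|H)\}$ — covered by Theorem~\ref{THM:ABOH} — and to the trivially coherent $P(C)=z\in[0,1]$, which holds since $z=xy+\lambda_2\in[0,1]$.)

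The main obstacle I anticipate is bookkeeping rather than conceptual: correctly identifying which constituents survive (the call-off behaviour of $C|(A|H)$ when $x=0$ versus $x>0$, via Remark~\ref{REM:DIN-COHER}) so that the point set is right, and then verifying that the extreme solutions $\lambda_2=0$ and $\lambda_2=1-x$ genuinely remain non-negative and sum to one for all $(x,y)\in[0,1]^2$. A clean alternative that sidesteps part of this is to invoke the decomposition $C=(A|H)\wedge C+(\no{A}|H)\wedge C$ from Remark~\ref{REM:2}, together with the product formula (Theorem~\ref{THM:PRODUCT}) giving $\prev[(A|H)\wedge C]=xy$ and $\prev[(\no{A}|H)\wedge C]=(1-x)z^*$ for $z^*=\prev[C|(\no{A}|H)]$; then $P(C)=xy+(1-x)z^*$, and by Theorem~\ref{THM:ABOH} $z^*$ ranges over all of $[0,1]$ independently of $(x,y)$, so $P(C)$ ranges over $[xy,\,xy+1-x]$. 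I would present the convex-hull computation as the primary argument and mention the decomposition argument as a remark confirming the result.
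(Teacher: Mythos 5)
Your argument is correct, but your primary route is genuinely different from the paper's. You establish the bounds by checking coherence of $(x,y,z)$ on $\{A|H,\,C|(A|H)\,,C\}$ directly via the characterization of Theorem~\ref{SYSTEM-SOLV}: you list the constituents and points $Q_h$, note that $Q_5,Q_6$ are convex combinations of $Q_1,\ldots,Q_4$, and solve the system, which forces $\lambda_1=xy$, $\lambda_3=x(1-y)$, and leaves $\lambda_2\in[0,1-x]$ free, so $z=xy+\lambda_2$ ranges exactly over $[xy,\,xy+1-x]$. The paper instead takes what you relegate to a closing remark as its entire proof: it uses the decomposition $C=(A|H)\wedge C+(\no{A}|H)\wedge C$ of Remark~\ref{REM:2}, linearity of prevision and the product formula (Theorem~\ref{THM:PRODUCT}) to obtain $P(C)=xy+(1-x)\,\prev[C|(\no{A}|H)]$, and then Theorem~\ref{THM:ABOH} to let $\prev[C|(\no{A}|H)]$ range over all of $[0,1]$; all geometric bookkeeping is thereby delegated to Theorem~\ref{THM:ABOH}, which was already proved by exactly the kind of convex-hull computation you carry out. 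Your direct computation buys self-containedness and in effect proves the stronger statement recorded as Theorem~4$'$ (the full set $\Pi$ of coherent triples), at the cost of the bookkeeping you yourself flag: strictly, the proper subsets $J$ to be checked include the pairs $\{A|H,C\}$ and $\{C|(A|H),C\}$, which are not subfamilies of $\{A|H,C|(A|H)\}$ and need their own (easy) verification rather than being dismissed as covered by Theorem~\ref{THM:ABOH}, and the case $x=0$ of Remark~\ref{REM:DIN-COHER} (where $C|(A|H)=C|AH$ and $Q_5=Q_2$, $Q_6=Q_4$) should be treated explicitly, though your formulas remain valid there. The paper's route avoids these case distinctions entirely but leans on the compound/iterated-conditional machinery; both give the same bounds $z'=xy$, $z''=xy+1-x$.
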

\begin{proof}
We recall that (Theorem~\ref{THM:ABOH}) the assessment $(x,y)$ on $\{A|H, C|(A|H)\}$ is coherent for every $(x,y)\in[0,1]^2$.
From (\ref{EQ:DECOMP}), by the linearity of prevision, and by  Theorem~\ref{THM:PRODUCT}, 
 we obtain
\[
 \begin{array}{lll}
	z&=&P(C)=\prev[(A|H)\wedge C + (\no{A}|H)\wedge C]=\mathbb{P}[(A|H)\wedge C] +\mathbb{P}[(\no{A}|H)\wedge C ]=\\
&=&	P(A|H)\mathbb{P}[C|(A|H)] +P(\no{A}|H)\mathbb{P}[C|(\no{A}|H)]=xy+(1-x)\mathbb{P}[C|(\no{A}|H)]\,.
	\end{array}
\]
From Theorem~\ref{THM:ABOH}, given any coherent assessment $(x,y)$ on $\{A|H, C|(A|H)\}$, the extension $t=\mathbb{P}[C|(\no{A}|H)]$ on $C|(\no{A}|H)$ is coherent for every $t\in[0,1]$. Then, as $z=xy+(1-x)t$, it follows that $z'=xy$ and $z''=xy+1-x$.
\end{proof}
\begin{remark}
We observe that the result given in Theorem~\ref{THM:MODUSPONENS} also holds when $H=\top$, which is well-known (see, e.g., \cite{pfeifer09b}) 
\end{remark}
We notice that Theorem~\ref{THM:MODUSPONENS} can be rewritten as\\
{\bf Theorem 4$'$.}
\emph{
Given any  logically independent events $A,C,H$, with $A\neq \bot$  and $H\neq \bot$,  the set $\Pi$ of all coherent assessments $(x,y,z)$ on
$\{A|H,C|(A|H),C\}$ is }
\begin{equation}
\Pi=\{(x,y,z)\in[0,1]^3: (x,y)\in[0,1]^2, z\in [xy,xy+1-x]\}.
\end{equation}
\section{Concluding Remarks}
\label{SEC:Conclud}
We generalized the probabilistic modus ponens in terms of conditional random quantities in the setting of coherence. Specifically, we  replaced the categorical premise $A$  and the antecedent $A$ of the conditional   premise $C|A$ by the conditional event $A|H$. 
We proved a generalized decomposition formula for conditional events and we gave some results on compound of conditionals and iterated conditionals. We propagated the previsions from the premises of the generalized probabilistic modus ponens to the conclusion.
 Interestingly,   the lower and the upper bounds  on the conclusion of the generalized probabilistic modus ponens  coincide with the respective bounds on the conclusion for  the (non-nested) probabilistic modus ponens. 
In future work we will focus on similar generalizations of other argument forms like the probabilistic modus tollens. Moreover we will study other instantiations to obtain further generalizations, e.g., by also replacing the  consequent $C$ of the conditional   premise $C|A$  and the conclusion $C$ by a conditional event $C|K$.
\\ \ \\
{\bf Acknowledgments.}
We thank  \emph{DFG},  \emph{FMSH}, and \emph{Villa Vigoni} for supporting joint meetings at Villa Vigoni where parts of this work originated (Project: ``Human Rationality: Probabilistic Points of View'').

\end{document}